\numberwithin{equation}{section}
\newtheorem{theorem}[equation]{Theorem}
\newtheorem{definition}[equation]{Definition}
\newtheorem{lemma}[equation]{Lemma}
\newtheorem{cor}[equation]{Corollary}
\theoremstyle{definition}
\newcommand{\bN}{\mathbb{N}}
\newcommand{\bC}{\mathbb{C}}
\newcommand{\F}{\mathcal{F}}
\newcommand{\E}{\mathcal{E}}
\renewcommand{\L}{\mathcal{L}}
\newcommand{\mD}{\mathcal{D}}
\newcommand{\M}{\mathcal{M}}
\newcommand{\N}{\mathcal{N}}
\newcommand{\D}{\mathbf{D}}
\renewcommand{\H}{\mathcal{H}}
\newcommand{\Hom}{\operatorname{Hom}}
\newcommand{\Aut}{\operatorname{Aut}}
\newcommand{\Inn}{\operatorname{Inn}}
\newcommand{\Iso}{\operatorname{Iso}}
\newcommand{\id}{\operatorname{id}}
\newcommand{\W}{\mathbf{W}}
\newcommand{\hyp}{\mathfrak{hyp}}
\def \<{\langle }
\def \>{\rangle }
\newcommand{\subn}{\unlhd\!\unlhd\;}
\renewcommand{\phi}{\varphi}
\title[Normalizers and centralizers of $p$-subgroups in normal subsystems of fusion systems]{Normalizers and centralizers of $p$-subgroups in normal subsystems of fusion systems}
\author[E.~Henke]{Ellen Henke}
\address{Institut f{\"u}r Algebra, Fakult{\"a}t Mathematik, Technische Universit{\"a}t Dresden, 01062 Dresden, Germany}
\email{ellen.henke@tu-dresden.de}
\begin{document}

\begin{abstract}
Suppose $\E$ is a normal subsystem of a saturated fusion system $\F$ over $S$. If $X\leq S$ is fully $\F$-normalized, then Aschbacher defined a normal subsystem $N_\E(X)$ of $N_\F(X)$. In this short note we revisit and generalize this result using the theory of localities. Our more general approach leads in particular to a normal subsystem $C_\E(X)$ of $C_\F(X)$ for every $X\leq S$ which is fully $\F$-centralized.
\end{abstract}

\maketitle





\section{Introduction}

Let $\F$ be a saturated fusion system over a $p$-group $S$, and let $\E$ be a normal subsystem of $\F$. If $X\leq S$ is fully normalized in $\F$, Aschbacher \cite[8.24]{Aschbacher:2011} introduced a normal subsystem $N_\E(X)$ of $N_\F(X)$, which should be thought of as the normalizer of $X$ in $\E$ as the notation suggests. The purpose of this paper is to review and generalize this result using the theory of localities and to make transparent how $N_\E(X)$ can be realized inside of a subcentric locality over $\F$. For an introduction to localities, the reader is referred to \cite{Chermak:2015} or to the summary in Sections~4.1, 4.2, and 4.4  of \cite{Henke:Regular}. A \textit{subcentric locality} over $\F$ is a locality $(\L,\Delta,S)$ such that $\F=\F_S(\L)$, $\Delta=\F^s$ and, for every $P\in\Delta$, the group $N_\L(P)$ is of characteristic $p$. Here a finite group $G$ is said to be \emph{of characteristic $p$} if $C_G(O_p(G))\leq O_p(G)$. For the definition of the set $\F^s$ of subcentric subgroups see \cite[Definition~1]{Henke:2015}. 

\smallskip

By \cite[Theorem~A]{Henke:2015}, there exists always a subcentric locality over $\F$. Moreover, if $(\L,\Delta,S)$ is a subcentric locality over $\F$, then it is shown in \cite[Theorem~A]{Chermak/Henke} that there exists a unique partial normal subgroup $\N$ of $\L$ with $\N\cap S=T$ and $\E=\F_T(\N)$. These results lead to a very natural way of showing that $N_\E(X)$ is normal in $N_\F(X)$ if $X\leq S$ is fully normalized in $\F$. More generally, if $X\leq S$ and $K\leq \Aut(X)$, we obtain a similar result for $K$-normalizers provided $X$ is fully $K$-normalized and $K\subn K\Inn(X)$. In particular, this leads to a normal subsystem $C_\E(X)$ of $C_\F(X)$ if $X\leq S$ is fully $\F$-centralized. The reader is referred to Theorem~\ref{T:NEKX} and Corollary~\ref{C:NEXCEX} for the precise statements of our results.

\smallskip

Throughout, $p$ is assumed to be a prime. The reader is referred to \cite[Sections~I.1-I.7]{Aschbacher/Kessar/Oliver:2011} for an introduction to the theory of fusion systems, but we will nevertheless recall some of the definitions below. We adapt the notation and terminology from this reference except that we will conjugate from the right and write homomorphisms on the right hand side of the argument. In particular, if $G$ is a group and $g\in G$, we denote by $c_g$ the map from $G$ to $G$ with $xc_g:=x^g:=g^{-1}xg$.

\section{Localities attached to $p$-local subsystems}

\subsection{$K$-normalizers in fusion systems}

Let $\F$ be a saturated fusion system over a $p$-group $S$, let $X\leq S$ and let $K\leq \Aut(X)$. The $K$-normalizer $N_\F^K(X)$ is then defined and a fusion system over 
\[N_S^K(X):=\{s\in N_S(X)\colon c_s|_X\in K\}.\]
Namely, an $\F$-morphism between subgroups of $N_S^K(X)$ is a morphism in $N_\F^K(X)$ if and only if it extends to an $\F$-morphism that acts on $X$ as an element of $K$. The subgroup $X$ is called \emph{fully $K$-normalized} if 
\[|N_S^K(X)|\geq |N_S^{K^\phi}(X\phi)|\]
for every $\phi\in\Hom_\F(X,S)$, where $K^\phi:=\phi^{-1}K\phi\leq \Aut(X\phi)$. If $X$ is fully $K$-normalized in $\F$, then $N_\F^K(X)$ is saturated by \cite[Theorem~I.5.5]{Aschbacher/Kessar/Oliver:2011}. The most important cases of this $K$-normalizer are the normalizer $N_\F(X):=N_\F^{\Aut(X)}(X)=N_\F^{\Aut_\F(X)}(X)$ and the centralizer $C_\F(X):=N_\F^{\{\id\}}(X)$ of $X$.

\subsection{Restriction} \label{SS:Restrictions} Before we turn attention to $K$-normalizers in subcentric localities, we first summarize a very general construction here. Let $(\L,\Delta,S)$ always be a locality, let $\H$ be a partial subgroup of $\L$ and assume $\Gamma$ is a set of subgroups of $R:=S\cap\H$ such that $\Gamma$ is closed under passing to  $\H$-conjugates and overgroups in $R$. Suppose furthermore that there exists some subgroup $X\leq S$ such that the following properties hold:
\begin{itemize}
 \item [(Q1)] $\<P,X\>\in\Delta$ for all $P\in\Gamma$;
 \item [(Q2)] $N_\H(P_1,P_2)\subseteq N_\L(\<P_1,X\>,\<P_2,X\>)$ for all $P_1,P_2\in\Gamma$.  
\end{itemize}
Set
\[\H|_\Gamma:=\{f\in\H\colon S_f\cap R\in\Gamma\}.\]
Then $\H|_\Gamma$ becomes naturally a partial group as follows: Let $\D_0$ be the set of words $(g_1,\dots,g_n)\in\W(\H)$ for which there exist $P_0,P_1,\dots,P_n\in\Gamma$ with $P_{i-1}^{g_i}=P_i$ for all $i=1,\dots,n$. We mean here to allow the case $n=0$ so that the empty word is an element of $\D_0$. It is shown in \cite[Lemma~9.6]{Henke:2015} that the set $\H|_\Gamma$ together with the restriction of the inversion map on $\L$ to $\H|_\Gamma$ and with the product $\Pi_0:=\Pi|_{\D_0}\colon \D_0\rightarrow \H|_\Gamma$ forms a partial group. If $R=S\cap\H$ is a maximal $p$-subgroup of $\H$, then $(\H|_\Gamma,\Gamma,R)$ is a locality by \cite[Lemma~9.8]{Henke:2015}.

\subsection{$K$-normalizers in localities}

Suppose $(\L,\Delta,S)$ is a subcentric locality over a saturated fusion system $\F$, $X\leq S$ and $K\leq \Aut(X)$. Define
\[N_\L^K(X):=\{f\in N_\L(X)\colon c_f|_X\in K\}\] 
and call this the \emph{$K$-normalizer} of $X$ in $\L$. For $\H\subseteq\L$ set
\[N_\H^K(X)=\H\cap N_\L^K(X).\]
Notice that $N_\L^{\Aut(X)}(X)$ equals the normalizer $N_\L(X)$, and $N_\L^{\{\id\}}(X)$ equals the centralizer $C_\L(X)$ (cf. \cite[Definition~4.4]{Henke:Regular}). By \cite[Lemma~9.10]{Henke:2015}, $N_\L^K(X)$ is a partial subgroup of $\L$ and, if $X$ is fully $K$-normalized, then $N_S^K(X)$ is a maximal $p$-subgroup of $N_\L^K(X)$. It is moreover shown in \cite[Lemma~9.12]{Henke:2015} that $N_\F^K(X)=\F_{N_S^K(X)}(N_\L^K(X))$. 

\smallskip

Suppose now $X$ is fully $K$-normalized and consider the properties (Q1) and (Q2) from Subsection~\ref{SS:Restrictions} for $\H:=N_\L^K(X)$ and $\Gamma:=N_\F^K(X)^s$. Clearly (Q2) holds. Moreover, it is shown in \cite[Lemma~3.14]{Henke:2015} that (Q1) holds as well. Thus it makes sense to define
\[\bN_\L^K(X):=N_\L^K(X)|_{N_\F^K(X)^s}.\]
If $X$ is fully $\F$-normalized, then set 
\[\bN_\L(X):=\bN_\L^{\Aut(X)}(X)=N_\L(X)|_{N_\F(X)^s}\]
and if $X$ is fully $\F$-centralized, then set
\[\bC_\L(X):=\bN_\L^{\{\id\}}(X)=C_\L(X)|_{C_\F(X)^s}.\]
Using the construction described in Subsection~\ref{SS:Restrictions}, we regard these sets as partial groups. If $X$ is fully $K$-normalized and $K\cap\Aut_\F(X)$ is subnormal in $\Aut_\F(X)$, then it is shown in \cite[Lemma~9.13]{Henke:2015} that $(\bN_\L^K(X),N_\F^K(X)^s,N_S^K(X))$ is a subcentric locality over $N_\F^K(X)$. In particular, if $X$ is fully normalized, then $(\bN_\L(X),N_\F(X)^s,N_S(X))$ is a subcentric locality over $N_\F(X)$ and, if $X$ is fully centralized, then  $(\bC_\L(X),C_\F(X)^s,C_S(X))$ is a subcentric locality over $C_\F(X)$. In the next lemma we show a generalization of this result. If $G$ is a finite group, $X\leq G$ and $K\leq \Aut(X)$, we set $N_G^K(X):=\{g\in N_G(X)\colon c_g|_X\in K\}$.

\begin{lemma}\label{L:bNKXSubcentric}
Suppose $(\L,\Delta,S)$ is a subcentric locality over a saturated fusion system $\F$,  $X\leq S$ and $K\leq \Aut(X)$ with $K\subn K\Inn(X)$. Then $(\bN_\L^K(X),N_\F^K(X)^s,N_S^K(X))$ is a subcentric locality over $N_\F^K(X)$
\end{lemma}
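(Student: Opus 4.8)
The plan is as follows. Since the hypothesis $K\subn K\Inn(X)$ is incomparable with the hypothesis $K\cap\Aut_\F(X)\subn\Aut_\F(X)$ of \cite[Lemma~9.13]{Henke:2015} — for instance it holds automatically whenever $X$ is abelian, as then $\Inn(X)=1$ — that lemma cannot simply be quoted, so I would reprove it, reducing all but one point to results already available. First I would pass to $K\cap\Aut_\F(X)$: as $c_f|_X\in\Aut_\F(X)$ for every $f\in N_\L(X)$, and as an $\F$-morphism that extends a morphism of $N_\F^K(X)$ and stabilises $X$ induces an element of $\Aut_\F(X)$ on $X$, the fusion system $N_\F^K(X)$, the partial group $N_\L^K(X)$, the group $N_S^K(X)$, and hence $\bN_\L^K(X)$ and the set $N_\F^K(X)^s$, are all unchanged if $K$ is replaced by $K\cap\Aut_\F(X)$; and intersecting a subnormal series from $K$ to $K\Inn(X)$ with $\Aut_\F(X)$, using $K\Inn(X)\cap\Aut_\F(X)=(K\cap\Aut_\F(X))\Inn(X)$, shows the hypothesis descends. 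So I may assume $K\le\Aut_\F(X)$; I may also assume $X$ is fully $K$-normalised, this being implicit in the definitions of $\bN_\L^K(X)$ and $N_\F^K(X)^s$ (otherwise one first replaces $X$ by a fully $K$-normalised $\F$-conjugate and transports the situation along the conjugating morphism).

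With $X$ fully $K$-normalised the locality structure is then assembled from the cited results: $N_S^K(X)$ is a maximal $p$-subgroup of $N_\L^K(X)$ by \cite[Lemma~9.10]{Henke:2015}, (Q1) holds by \cite[Lemma~3.14]{Henke:2015} and (Q2) is clear, so $(\bN_\L^K(X),N_\F^K(X)^s,N_S^K(X))$ is a locality by \cite[Lemma~9.8]{Henke:2015}; its fusion system is $\F_{N_S^K(X)}(N_\L^K(X))=N_\F^K(X)$ by \cite[Lemma~9.12]{Henke:2015} (restricting $N_\L^K(X)$ to a set of subgroups containing $N_\F^K(X)^s$ does not alter the fusion system), this system is saturated by \cite[Theorem~I.5.5]{Aschbacher/Kessar/Oliver:2011}, and by construction the objects of the locality are exactly $N_\F^K(X)^s$. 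Thus it remains only to show that the node group $N_{\bN_\L^K(X)}(P)$ is of characteristic $p$ for every $P\in N_\F^K(X)^s$.

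Fix such a $P$. Since $P\le N_S^K(X)\le N_S(X)$, the product $Q:=PX$ is a subgroup with $X\nsg Q$, and $Q\in\Delta$ by (Q1); hence $G:=N_\L(Q)$ is a finite group of characteristic $p$ with $Q\le O_p(G)$, so in particular $P\le O_p(G)$ and $X\le O_p(G)$. Since $P\in N_\F^K(X)^s$ we have $N_{\bN_\L^K(X)}(P)=N_{N_\L^K(X)}(P)$, and using (Q2) this is identified with $N_G^K(X)\cap N_G(P)$. So it suffices to prove the purely group-theoretic statement: if $G$ is of characteristic $p$, $P,X\le O_p(G)$ and $K\le\Aut_G(X)$ satisfies $K\subn K\Inn(X)$, then $N_G^K(X)\cap N_G(P)$ is of characteristic $p$. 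For this I would use two facts. First, if $L$ is of characteristic $p$ and $M\nsg L$ has $p$-power index, then $M$ is of characteristic $p$: any $p'$-element $c$ of $C_M(O_p(M))$ centralises $O_p(L)\cap M\,(\le O_p(M))$, and acts trivially on $O_p(L)/(O_p(L)\cap M)$ because $[O_p(L),M]\le O_p(L)\cap M$; by coprime action $c$ then centralises $O_p(L)$, so $c\in C_L(O_p(L))\le O_p(L)$ and $c=1$. Second, $[K\Inn(X):K]$ divides $|\Inn(X)|$ and is therefore a power of $p$, so a subnormal series from $K$ to $K\Inn(X)$ refines to a series $K=K_0\nsg K_1\nsg\cdots\nsg K_n=K\Inn(X)$ with $|K_i/K_{i-1}|=p$ for all $i$; and for each such step, with $\psi$ the homomorphism $g\mapsto c_g|_X$ on $N_G(X)\cap N_G(P)$, the subgroup $N_G^{K_{i-1}}(X)\cap N_G(P)=\psi^{-1}(K_{i-1}\cap\im\psi)$ is normal of $p$-power index in $N_G^{K_i}(X)\cap N_G(P)=\psi^{-1}(K_i\cap\im\psi)$. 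Combining, $N_G^K(X)\cap N_G(P)$ is a member of a subnormal series ending in $N_G^{K\Inn(X)}(X)\cap N_G(P)$ all of whose factors have $p$-power order, so by the first fact it suffices to prove that this top term is of characteristic $p$.

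This last step is the main obstacle, and the one place where the argument genuinely differs from \cite[Lemma~9.13]{Henke:2015}: the relevant subnormality is now $\Inn(X)\le K\Inn(X)$, so one must run a variant of that lemma's proof with the subnormality there replaced by the fact that $\Inn(X)$ is a $p$-group. The awkward feature is that $X$ need not normalise $P$, so $X$ is not a normal subgroup of $N_G(P)$ and one cannot argue as in the classical normalizer case; one should instead exploit that $P$ and $X$ both lie in $O_p(G)$ — so that $N_{O_p(G)}(X)\cap N_G(P)$ is a normal $p$-subgroup of $N:=N_G^{K\Inn(X)}(X)\cap N_G(P)$ — in order to show that $O_p(N)$ is large enough that $C_N(O_p(N))$ is forced into $O_p(N)$. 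I expect this verification, rather than any of the structural reductions, to be where the real work lies.
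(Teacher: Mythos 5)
Your structural reductions are sound and essentially parallel the paper's: you pass to the group $G=N_\L(PX)$ of characteristic $p$ (using \cite[Lemma~3.14]{Henke:2015} to get $PX\in\F^s$), identify the node group with a $K$-normalizer intersected with $N_G(P)$, and exploit that $|\Inn(X)|$ is a power of $p$ to turn the subnormal series from $K$ to $K\Inn(X)$ into a subnormal chain of subgroups of $G$ with $p$-power indices, so that everything reduces to the top term $N_G^{K\Inn(X)}(X)=N_G^K(X)X$. But at exactly that point you stop: you state that showing this top term is of characteristic $p$ ``is where the real work lies'' and offer only a direction, not an argument. That is a genuine gap, and it is precisely the content of the paper's Lemma~\ref{L:KnormalizerInGroup}(a). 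The closing argument is short: for $H$ with $C_G(X)\leq H\leq N_G(X)$ and $X\leq H$, set $Y:=XO_p(C_H(X))$; then $Y$ is a normal $p$-subgroup of $H$ (as $X\nsg H$ and $O_p(C_H(X))$ is characteristic in the normal subgroup $C_H(X)$), so $Y\leq O_p(H)$, while $C_H(Y)=C_{C_H(X)}(O_p(C_H(X)))\leq O_p(C_H(X))\leq Y$ because $C_H(X)=C_G(X)\nsg N_G(X)$ is of characteristic $p$. Hence $C_H(O_p(H))\leq C_H(Y)\leq O_p(H)$.

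The difficulty you perceive (``$X$ need not normalise $P$'') is an artifact of intersecting with $N_G(P)$ too early. Reverse the order: first show that $N_G^K(X)$ itself is of characteristic $p$ (via the chain $N_G^K(X)\subn N_G^K(X)X$ and the argument above applied to $H=N_G^K(X)X$), and then observe that your node group is $N_{N_G^K(X)}(P)$, the normalizer of the $p$-subgroup $P\leq N_S^K(X)$ in a group of characteristic $p$; that this is again of characteristic $p$ is the standard fact \cite[Lemma~1.2(c)]{MS:2012b}, which you will need in any case (your self-contained proof of the ``normal of $p$-power index'' fact is correct but does not substitute for it). With that reordering and the $Y=XO_p(C_H(X))$ trick supplied, your proof closes and coincides with the paper's.
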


\begin{proof}
Set $\L_0:=\bN_\L^K(X)$ for short and let $P\in N_\F^K(X)^s$. The triple $(\L_0,N_\F^K(X)^s,N_S^K(X))$ is a locality over $N_\F^K(X)$ by \cite[Lemma~9.13]{Henke:2015}. Hence, it remains only to show that $N_{\L_0}(P)$ is of characteristic $p$, where the normalizer is a priori formed in $\L_0$, but equals $N_\L(P)\cap \L_0$ by \cite[Lemma~9.7]{Henke:2015}. It follows thus from the definition of $\L_0$ that $N_{\L_0}(P)=N_\L(P)\cap \L_0=N_\L(P)\cap N_\L^K(X)$. By \cite[Lemma~3.14]{Henke:2015}, we have $PX\in\F^s=\Delta$ and thus $G:=N_\L(PX)$ is a group of characteristic $p$. Notice that $N_{\L_0}(P)=N_\L(P)\cap N_\L^K(X)=N_{N_G^K(X)}(P)$. It follows from Lemma~\ref{L:KnormalizerInGroup}(b) below that $N_G^K(X)$ is of characteristic $p$. Hence, $N_{\L_0}(P)=N_{N_G^K(X)}(P)$ is of characteristic $p$ by \cite[Lemma~1.2(c)]{MS:2012b} as required. 
\end{proof}

\begin{lemma}\label{L:KnormalizerInGroup}
 Let $G$ be a finite group of characteristic $p$ and $X$ be a $p$-subgroup of $G$.
\begin{itemize}
 \item [(a)] If $C_G(X)\leq H\leq N_G(X)$ such that $H\subn HX$, then $H$ is of characteristic $p$.
 \item [(b)] If $K\leq \Aut(X)$ with $K\subn K\Inn(X)$, then $N_G^K(X)$ is of characteristic $p$. 
\end{itemize} 
\end{lemma}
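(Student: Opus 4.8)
The plan is to deduce (b) from (a) and to prove (a) by a direct argument with $O_p$. For part (a), set $H_0 := HX$ and note that $H_0$ normalizes $X$, so $X \leq O_p(H_0 \bmod \text{something})$—more carefully, since $X \leq H_0$ and $X$ is normal in $H_0$ (because $H \leq N_G(X)$ and $X$ normalizes itself), we have $X \leq O_p(H_0)$. The key observation is that $C_G(X) \leq H \leq H_0$ gives $C_{H_0}(X) = C_G(X) \cap H_0 = C_G(X)$ (using $C_G(X) \leq H \leq H_0$), so in particular $C_{H_0}(O_p(H_0)) \leq C_{H_0}(X) = C_G(X) \leq H$. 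Now I would argue that $H_0$ is of characteristic $p$: this follows from the general fact that if $G$ is of characteristic $p$ and $H_0$ is a subgroup with $X \leq H_0$ a normal $p$-subgroup and $C_G(X) \leq H_0$, then $O_p(H_0) \geq X$ and one can run a standard argument (as in \cite[Lemma~1.2]{MS:2012b} or Aschbacher's treatment of characteristic $p$ groups) showing $C_{H_0}(O_p(H_0)) \leq O_p(H_0)$. The hypothesis $H \subn H_0 = HX$ then lets us pass from $H_0$ being of characteristic $p$ to $H$ being of characteristic $p$, again via \cite[Lemma~1.2(c)]{MS:2012b}, which states that a subnormal subgroup of a group of characteristic $p$ is itself of characteristic $p$.

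For part (b), I would reduce to part (a) by identifying $N_G^K(X)$ with an appropriate intermediate subgroup between a centralizer and a normalizer. The conjugation action gives a homomorphism $N_G(X) \to \Aut(X)$, $g \mapsto c_g|_X$, with kernel $C_G(X)$; by definition $N_G^K(X)$ is the preimage of $K$ under this map, and the preimage of $K\Inn(X)$ is $N_G^{K\Inn(X)}(X)$. Since $\Inn(X)$ is the image of $X$ under this homomorphism, $N_G^{K\Inn(X)}(X)$ contains $X$ together with $C_G(X)$; in fact I expect $N_G^{K\Inn(X)}(X) = N_G^K(X)\,X$ (the image of $N_G^K(X)\cdot X$ under the conjugation map is exactly $K\Inn(X)$, and the kernel $C_G(X)$ lies in $N_G^K(X)$, so the preimages agree). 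Then the subnormality $K \subn K\Inn(X)$ in $\Aut(X)$ pulls back along the homomorphism to give $N_G^K(X) \subn N_G^{K\Inn(X)}(X) = N_G^K(X)\,X$: subnormality is preserved under taking preimages of the relevant subgroups of $\Aut(X)$ since $C_G(X) \leq N_G^K(X) \leq N_G^{K\Inn(X)}(X) \leq N_G(X)$ and preimage of a subnormal series is a subnormal series. Finally $C_G(X) \leq N_G^K(X) \leq N_G(X)$, so with $H := N_G^K(X)$ the hypotheses of part (a) are satisfied, and we conclude that $N_G^K(X)$ is of characteristic $p$.

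The main obstacle is part (a): verifying that a subgroup $H_0$ sitting between $C_G(X)$ and $N_G(X)$, with $X$ normal in $H_0$, is of characteristic $p$ when $G$ is. The crux is showing $O_p(H_0)$ is self-centralizing in $H_0$. One clean way is: $X \trianglelefteq H_0$ forces $X \leq O_p(H_0)$, hence $C_{H_0}(O_p(H_0)) \leq C_{H_0}(X) = C_G(X) \cap H_0$. Now $C_G(X)$ centralizes $X$ but we still need to control it inside $H_0$; here one uses that $C_G(X)$ is itself of characteristic $p$ — indeed, a standard lemma (e.g.\ \cite[Lemma~1.2]{MS:2012b}) says the centralizer in a characteristic-$p$ group $G$ of a $p$-subgroup $X$ with $X \leq O_p(G)$ is of characteristic $p$; but if $X \not\leq O_p(G)$ one must be a little more careful, replacing $G$ by $N_G(X)$, which is of characteristic $p$ by the same lemma, and noting $X \leq O_p(N_G(X))$. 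I would phrase part (a) so that it is applied (as in the proof of Lemma~\ref{L:bNKXSubcentric}) with $G = N_\L(PX)$ already a group of characteristic $p$ in which $X \leq O_p(G)$, which makes the argument go through without the extra reduction; in the general statement I would simply first pass from $G$ to $N_G(X)$ to arrange $X \leq O_p(G)$, since $H \leq N_G(X)$ anyway. Once $C_{H_0}(O_p(H_0)) \leq O_p(C_G(X))$ is established, and $O_p(C_G(X))$ centralizes $X$ and lies in $H_0$, a short chase shows it is contained in $O_p(H_0)$, giving characteristic $p$ for $H_0$ and then for $H$ by subnormality.
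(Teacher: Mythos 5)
Your proposal is correct and follows essentially the same route as the paper: part (a) is proved by passing to $HX$, noting $C_G(X)$ is of characteristic $p$ (via $N_G(X)$ and subnormality, citing \cite[Lemma~1.2]{MS:2012b}), and showing $XO_p(C_G(X))$ is a self-centralizing normal $p$-subgroup; part (b) pulls the subnormal series $K\subn K\Inn(X)$ back along the conjugation homomorphism to get $N_G^K(X)\subn N_G^K(X)X$ and then applies (a). The only difference is presentational.
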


\begin{proof}
\textbf{(a)} By \cite[Lemma~1.2(c)]{MS:2012b}, $N_G(X)$ is of characteristic $p$. Moreover, by \cite[Lemma~1.2(a)]{MS:2012b} every subnormal subgroup of a group of characteristic $p$ is also of characteristic $p$. In particular, $C_G(X)\unlhd N_G(X)$ is of characteristic $p$. Moreover, as $H\subn HX$, $H$ is of characteristic $p$ if $HX$ is of characteristic $p$. Replacing $H$ by $HX$ we may thus assume that $X\leq H$. Set now $Y:=XO_p(C_H(X))$. Then $Y$ is a normal $p$-subgroup of $H$ and thus contained in $O_p(H)$. Moreover, $C_H(Y)=C_{C_H(X)}(O_p(C_H(X)))\leq O_p(C_H(X))\leq Y$ as $C_H(X)=C_G(X)$ has characteristic $p$. This shows that $H$ is of characteristic $p$.

\smallskip

\textbf{(b)} Let $K=K_0\unlhd K_1\unlhd\cdots\unlhd K_n=K\Inn(X)$ be a subnormal series of $K$ in $K\Inn(X)$. Observe that
\[N_G^K(X)=N_G^{K_0}(X)\unlhd N_G^{K_1}(X)\unlhd\cdots \unlhd N_G^{K_n}(X)\]
and 
\begin{eqnarray*}
 N_G^{K_n}(X)&=&\{g\in N_G(X)\colon c_g|_X\in K\Inn(X)\}=\{g\in N_G(X)\colon \exists x\in X\mbox{ such that }c_{gx^{-1}}|_X\in K\}\\
&=&\{g\in N_G(X)\colon \exists x\in X\mbox{ such that }gx^{-1}\in N_G^K(X)\}=N_G^K(X)X.
\end{eqnarray*}
Hence, $N_G^K(X)\subn N_G^K(X)X$. Notice that $C_G(X)\leq N_G^K(X)$ as $K$ contains the identity on $X$. Hence, (b) follows from (a) applied with $N_G^K(X)$ in place of $H$. 
\end{proof}


\section{$K$-Normalizers of $p$-subgroups in normal subsystems}

\textbf{Throughout this section, let $\F$ be a saturated fusion system over $S$, and let $\E$ be a normal subsystem of $\F$ over $T\leq S$. Moreover, let $X\leq S$ and $K\leq \Aut(X)$.}

\smallskip

In this subsection we will define $N_\E^K(X)$ if $X$ is fully $K$-normalized. Moreover, if $K\subn K\Inn(X)$, then we show that $N_\E^K(X)$ is normal in $N_\F^K(X)$ and see furthermore how $N_\E^K(X)$ can be realized inside of a subcentric locality over $\F$. 

\smallskip

Before we give the definition of $N_\E^K(X)$ let us briefly recall a few definitions. A subsystem $\mD$ of $\F$ over $R$ is said to have \emph{$p$-power index} in $\F$ if $\hyp(\F):=\<[P,O^p(\Aut_\F(P))]\colon P\leq S\>\leq R$ and $O^p(\Aut_\F(P))\leq \Aut_{\mD}(P)$ for every $P\leq R$. Given a subgroup $R$ of $S$ with $\hyp(\F)\leq R$, it turns out that there is a unique saturated subsystem $\mD$ of $\F$ over $R$ of $p$-power index; moreover, such a subsystem $\mD$ is normal in $\F$ if and only if $R$ is strongly closed; see \cite[Theorem~I.7.4]{Aschbacher/Kessar/Oliver:2011}. The unique saturated subsystem of $\F$ over $\hyp(\F)$ is denoted by $O^p(\F)$. 

\smallskip

We will use that there is a notion of a product $\E X$. Such a product was first introduced by Aschbacher \cite[Chapter~8]{Aschbacher:2011} with a simplified construction given in \cite{Henke:2013}. We argue here based on the latter construction. By \cite[Theorem~1]{Henke:2013},  $\E X$ is saturated and $O^p(\E X)=O^p(\E)$. In particular, $\hyp(\E X)=\hyp(\E)\leq T$. The product $\E X$ is more precisely denoted as $(\E X)_\F$, since it is defined as a certain subsystem of $\F$ which depends on $\F$ (cf. \cite[Example~7.4]{Henke:2013}).

\begin{definition}
\begin{itemize}
\item If $X$ is fully $K$-normalized in $\E X$, then define $N_\E^K(X)$ to be the unique saturated subsystem of $N_{\E X}^K(X)$ over $N_T^K(X):=T\cap N_S^K(X)$ of $p$-power index. 
\item If $X$ is fully normalized in $\E X$, then set $N_\E(X):=N_\E^{\Aut(X)}(X)$. 
\item If $X$ is fully centralized in $\E X$, then set $C_\E(X):=N_\E^{\{\id\}}(X)$. 
\end{itemize}
\end{definition}

Let us explain why the above definition makes sense. If $X$ is fully $K$-normalized in $\E X$, then $N_{\E
X}^K(X)$ is saturated by \cite[Theorem~I.5.5]{Aschbacher/Kessar/Oliver:2011}. Moreover, $\hyp(N_{\E X}^K(X))\leq \hyp(\E X)\leq T$ and
thus $\hyp(N_{\E X}^K(X))\leq N_T^K(X)$ where $N_T^K(X)$ is strongly closed in $N_{\E
X}^K(X)$. So by \cite[Theorem~I.7.4]{Aschbacher/Kessar/Oliver:2011}, there exists a
unique subsystem of $N_{\E X}^K(X)$ over $N_T^K(X)$ of $p$-power index, which is then normal in $N_{\E X}^K(X)$. 

\smallskip

It should be noted that the definition of $N_\E^K(X)$ depends on the
fusion system $\F$, since $\E X=(\E X)_\F$ depends on $\F$ (see \cite[p.7]{Henke/Lynd} for an example). 

\smallskip

If $X$ is fully normalized, if follows from \cite[Theorem~1]{Henke:2013} that $N_\E(X)$ equals $O^p(N_{\E X}(X))N_T(X)$, where the product is formed inside of $N_{\E X}(X)$. Hence, our definition of $N_\E(X)$ agrees with the one given by Aschbacher \cite[8.24]{Aschbacher:2011}. We prove next that $N_\E^K(X)$ is well-defined if $X$ is fully $K$-normalized in $\F$.

\begin{lemma}\label{L:FullyKNormalized}
 If $X$ is fully $K$-normalized in $\F$, then $X$ is also fully $K$-normalized in $\E X$.
\end{lemma}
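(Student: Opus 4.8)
The plan is to compare the $K$-normalizer setup inside $\F$ with that inside the subsystem $\E X$, which is a fusion system over $T \cap S$ (more precisely over a subgroup of $S$ containing $\hyp(\E)$). The key point is that $X \leq S$ and $\E X$ is a saturated subsystem of $\F$ on the ambient $p$-group $S$, so every $(\E X)$-morphism is an $\F$-morphism; in particular, for any $\psi \in \Hom_{\E X}(X, S)$ we have $\psi \in \Hom_\F(X,S)$, and the $K$-normalizer $N_{S}^{K^\psi}(X\psi)$ computed with respect to conjugation in $S$ is literally the same set whether we think of it inside $\F$ or inside $\E X$ — it only depends on $S$, $X\psi$, and $K^\psi$, all of which are intrinsic. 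Thus the "target side" quantities $|N_S^{K^\psi}(X\psi)|$ appearing in the definition of fully $K$-normalized are unchanged.

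First I would write out both definitions explicitly: $X$ is fully $K$-normalized in $\F$ means $|N_S^K(X)| \geq |N_S^{K^\phi}(X\phi)|$ for all $\phi \in \Hom_\F(X,S)$, while being fully $K$-normalized in $\E X$ means $|N_S^K(X)| \geq |N_S^{K^\psi}(X\psi)|$ for all $\psi \in \Hom_{\E X}(X,S)$ (note $N_S^K(X)$ is the same on both sides since $X \leq S$ in each case). Then I would observe that $\Hom_{\E X}(X,S) \subseteq \Hom_\F(X,S)$, because $\E X$ is a subsystem of $\F$ over a subgroup of $S$. Hence the collection of inequalities required for "fully $K$-normalized in $\E X$" is a \emph{subset} of those required for "fully $K$-normalized in $\F$", and the implication follows immediately.

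The only point needing a small amount of care — and the step I expect to be the main (minor) obstacle — is making sure the relevant objects genuinely live over $S$ rather than over some smaller group, so that the "target" quantities really do coincide. Since $\E X = (\E X)_\F$ is by construction a subsystem of $\F$ over a strongly closed subgroup of $S$ containing $\hyp(\E)$ and containing $X$ (as $X$ normalizes $\hyp(\E) \leq T$), every morphism in $\E X$ between subgroups of that group is in particular a morphism in $\F$ between subgroups of $S$; and $K^\psi = \psi^{-1} K \psi \leq \Aut(X\psi)$ is defined identically in both settings. With that bookkeeping in place, $\Hom_{\E X}(X,S) \subseteq \Hom_\F(X,S)$ together with the agreement of $N_S^K(X)$ and of each $N_S^{K^\psi}(X\psi)$ gives the result: any witness to failure of "fully $K$-normalized in $\E X$" is automatically a witness to failure of "fully $K$-normalized in $\F$", so fully $K$-normalized in $\F$ implies fully $K$-normalized in $\E X$.
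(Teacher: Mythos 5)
Your argument has a genuine gap, and it sits exactly where the actual content of the lemma lies. The subsystem $\E X=(\E X)_\F$ is a fusion system over $TX$, not over $S$. Consequently, ``$X$ is fully $K$-normalized in $\E X$'' means
\[
|N_{TX}^{K}(X)|\;\geq\;|N_{TX}^{K^\psi}(X\psi)|\qquad\text{for all }\psi\in\Hom_{\E X}(X,TX),
\]
where the $K$-normalizers are computed inside the Sylow subgroup $TX$ of $\E X$. Your proposal replaces these by $N_S^{K^\psi}(X\psi)$, asserting that the ``target side'' quantities are intrinsic and agree in both settings; that is false, since $N_{TX}^{K^\psi}(X\psi)=TX\cap N_S^{K^\psi}(X\psi)$ is in general a proper subgroup of $N_S^{K^\psi}(X\psi)$. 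The hypothesis $|N_S^K(X)|\geq|N_S^{K^\phi}(X\phi)|$ for all $\F$-morphisms $\phi$ does not formally imply $|N_{TX}^K(X)|\geq|N_{TX}^{K^\psi}(X\psi)|$: maximizing the normalizer in $S$ need not maximize its intersection with $TX$. So the inclusion $\Hom_{\E X}(X,TX)\subseteq\Hom_\F(X,S)$ alone does not give the result, and your ``subset of inequalities'' reduction collapses.

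The paper's proof supplies precisely what is missing. Given $\phi\in\Iso_{\E X}(X,Y)$ with $Y$ fully $K^\phi$-normalized in $\E X$, it invokes the extension property for fully $K$-normalized subgroups (\cite[Proposition~I.5.2]{Aschbacher/Kessar/Oliver:2011}) to produce $\chi\in\Aut_\F^K(X)$ and a morphism $\alpha\in\Hom_\F(YN_S^{K^\phi}(Y),S)$ with $\alpha|_Y=\phi^{-1}\chi$, and then uses that $T$ is strongly closed to show that $\alpha$ carries $N_{TX}^{K^\phi}(Y)=N_{TY}^{K^\phi}(Y)$ into $N_{TX}^{K}(X)$; this is what yields the needed inequality $|N_{TX}^{K^\phi}(Y)|\leq|N_{TX}^K(X)|$ between normalizers taken inside $TX$. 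Both the use of the extension axiom (via Proposition~I.5.2) and the strong closure of $T$ are essential and absent from your argument. If you want to salvage your approach, you would need to prove that a subgroup realizing the maximum of $|N_S^{K^\phi}(X\phi)|$ also realizes the maximum of $|N_{TX}^{K^\phi}(X\phi)|$ among its $\E X$-conjugates, and that is exactly the paper's argument.
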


\begin{proof}
Let $\phi\in\Iso_{\E X}(X,Y)$ such that $Y$ is fully $K^\phi$-normalized in $\E X$. Since $T$ is strongly closed, $(X\cap T)\phi=Y\cap T$. Hence, as $Y\leq TX$, we have $TX=TY$. As $X$ is fully $K$-normalized in $\F$, by \cite[Proposition~I.5.2]{Aschbacher/Kessar/Oliver:2011} (applied with $X$, $Y$ and $\phi^{-1}\in\Iso(Y,X)$ in place of $Q$, $P$ and $\phi$), there exist $\chi\in\Aut_\F^K(X)$ and $\alpha\in\Hom_\F(YN_S^{K^\phi}(Y),S)$ such that $\alpha|_Y=\phi^{-1}\chi$ (and thus $Y\alpha=X$). As $\phi\alpha=\phi(\alpha|_Y)=\chi\in\Aut_\F^K(X)\leq K$, we have $K^{\phi\alpha}=K$. This implies $N_S^{K^\phi}(Y)\alpha\leq N_S^{K^{\phi\alpha}}(X)=N_S^K(X)$. If $yt\in N_{TX}^{K^\phi}(Y)=N_{TY}^{K^\phi}(Y)$ with $y\in Y$ and $t\in T$, then $t=y^{-1}(yt)\in YN_S^{K^\phi}(Y)$ and so $t\alpha$ is defined. Hence, $(yt)\alpha=(y\alpha)(t\alpha)\in XT$ as $Y\alpha=X$ and $T$ is strongly closed in $\F$. This shows $N_{TX}^{K^\phi}(Y)\alpha\leq TX\cap N_S^K(X)=N_{TX}^K(X)$. In particular, $|N_{TX}^{K^\phi}(Y)|\leq |N_{TX}^K(X)|$. As $Y$ is fully $K^\phi$-normalized in $\E X$, it follows that $X$ is fully $K$-normalized in $\E X$. 
\end{proof}

\begin{theorem}\label{T:NEKX}
Suppose $X$ is fully $K$-normalized in $\F$ and $K\subn K\Inn(X)$ or $(K\cap \Aut_\F(X))\subn (K\cap \Aut_\F(X))\Inn(X)$. Then the following hold:
\begin{itemize}
\item [(a)] $N_\E^K(X)$ is a normal subsystem of $N_\F^K(X)$ with $N_\E^K(X)\subseteq \E$. 
\item [(b)] Suppose $(\L,\Delta,S)$ is a subcentric locality over $\F$ and $\N$ is a partial normal subgroup of $\L$ with $T=S\cap \N$ and $\E=\F_T(\N)$. Then $\M:=\N\cap \bN_\L^K(X)$ is a partial normal subgroup of $\bN_\L^K(X)$ with $\M\cap S=\M\cap N_S^K(X)=N_T^K(X)$ and $N_\E(X)=\F_{N_T^K(X)}(\M)$.
\end{itemize}
\end{theorem}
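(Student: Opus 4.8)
The plan is to reduce everything to statements about localities, using the dictionary between partial normal subgroups and normal subsystems provided by \cite[Theorem~A]{Chermak/Henke} together with the two subcentric localities $(\L,\Delta,S)$ and $(\bN_\L^K(X),N_\F^K(X)^s,N_S^K(X))$ (the latter being a subcentric locality over $N_\F^K(X)$ by Lemma~\ref{L:bNKXSubcentric} under the hypothesis $K\subn K\Inn(X)$; in the alternative case $(K\cap\Aut_\F(X))\subn(K\cap\Aut_\F(X))\Inn(X)$ one first observes that replacing $K$ by $K\cap\Aut_\F(X)$ changes neither $N_\F^K(X)$ nor $N_S^K(X)$ nor $N_\L^K(X)$, so we may and do assume $K\subn K\Inn(X)$ from the outset). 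First I would prove part (b): $\M:=\N\cap\bN_\L^K(X)$ is the intersection of a partial normal subgroup of $\L$ with a partial subgroup of $\L$, and since $\bN_\L^K(X)$ is itself built by restriction from $N_\L^K(X)=\L\cap N_\L^K(X)$, one checks directly from the definitions in Subsection~\ref{SS:Restrictions} that $\M$ is closed under inversion and under the restricted product $\Pi_0$, and that it is invariant under conjugation by elements of $\bN_\L^K(X)$ (this last point uses (Q2) and the fact that $\N\unlhd\L$); hence $\M\unlhd\bN_\L^K(X)$. The identity $\M\cap S=\M\cap N_S^K(X)=N_T^K(X)$ is immediate since $\M\cap S=\N\cap S\cap N_S^K(X)=T\cap N_S^K(X)=N_T^K(X)$.

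Next, to identify $\F_{N_T^K(X)}(\M)$, I would argue as follows. By \cite[Theorem~A]{Chermak/Henke} applied to the subcentric locality $(\bN_\L^K(X),N_\F^K(X)^s,N_S^K(X))$ over $N_\F^K(X)$, there is a unique partial normal subgroup $\M'$ of $\bN_\L^K(X)$ with $\M'\cap N_S^K(X)=N_T^K(X)$ and $\F_{N_T^K(X)}(\M')$ normal in $N_\F^K(X)$; by uniqueness it suffices to show $\M=\M'$, equivalently that $\D:=\F_{N_T^K(X)}(\M)$ is normal in $N_\F^K(X)$. For this I would show $\D$ has $p$-power index in $N_{\E X}^K(X)$ and is a subsystem over $N_T^K(X)$, then invoke the uniqueness clause of \cite[Theorem~I.7.4]{Aschbacher/Kessar/Oliver:2011} to conclude $\D=N_\E^K(X)$, which by the definition of $N_\E^K(X)$ is normal in $N_\F^K(X)$ and contained in $\E$. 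The $p$-power-index computation is where the real work lies: one must compare $O^p(\Aut_\D(P))$ with $O^p(\Aut_{N_{\E X}^K(X)}(P))$ for $P\leq N_T^K(X)$, and compute $\hyp(\D)$, using that automorphism groups in $\F_{N_T^K(X)}(\M)$ are realized by $N_{\M}(\cdot)$ inside the characteristic-$p$ groups $N_{\bN_\L^K(X)}(\cdot)$, together with the relation $\E=\F_T(\N)$ and the fact (from \cite[Theorem~1]{Henke:2013}) that $O^p(\E X)=O^p(\E)$.

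Finally, part (a) follows from part (b): apply \cite[Theorem~A]{Henke:2015} to get some subcentric locality $(\L,\Delta,S)$ over $\F$ and \cite[Theorem~A]{Chermak/Henke} to get the partial normal subgroup $\N$ with $\N\cap S=T$ and $\E=\F_T(\N)$; then (b) produces $\M\unlhd\bN_\L^K(X)$ with $\F_{N_T^K(X)}(\M)=N_\E^K(X)$, and since $\F_{N_S^K(X)}(\bN_\L^K(X))=N_\F^K(X)$ by \cite[Lemma~9.12]{Henke:2015}, the general fact that the fusion system of a partial normal subgroup is normal in the ambient fusion system (part of \cite[Theorem~A]{Chermak/Henke}) gives $N_\E^K(X)\unlhd N_\F^K(X)$. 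The containment $N_\E^K(X)\subseteq\E$ comes from $\M\subseteq\N$ at the level of localities, i.e.\ $\F_{N_T^K(X)}(\M)\subseteq\F_T(\N)=\E$ because every $\M$-conjugation map is an $\N$-conjugation map.

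I expect the main obstacle to be the $p$-power-index verification in part (b): one has to pin down exactly which automorphisms of subgroups of $N_T^K(X)$ appear in $\F_{N_T^K(X)}(\M)$ versus in $N_{\E X}^K(X)$, and show the two differ only by a $p$-group, carefully tracking how the restriction construction $\bN_\L^K(X)=N_\L^K(X)|_{N_\F^K(X)^s}$ interacts with the partial normal subgroup $\N$ and with the product $\E X$. The set-theoretic and partial-group bookkeeping (closure of $\M$ under $\Pi_0$, conjugation-invariance) is routine once one unwinds the definitions, but the interplay between $\hyp$, $O^p$, and the locality-theoretic description of $\E X$ will require care.
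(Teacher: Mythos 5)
Your skeleton matches the paper's: reduce to the case $K\subn K\Inn(X)$, form $\M=\N\cap\bN_\L^K(X)$ inside a subcentric locality, and identify $\E_0:=\F_{T_0}(\M)$ (where $T_0:=N_T^K(X)$) with $N_\E^K(X)$ via the uniqueness of the $p$-power-index subsystem over $T_0$. But two points need repair. First, a logical slip: you conclude that $\D=N_\E^K(X)$ is normal in $N_\F^K(X)$ ``by the definition of $N_\E^K(X)$''. The definition only makes $N_\E^K(X)$ normal in $N_{\E X}^K(X)$; normality in $N_\F^K(X)$ is precisely the content of (a) and is not free. The correct order --- and the one the paper uses --- is the reverse: \cite[Theorem~A]{Chermak/Henke}, applied to $\M\unlhd\bN_\L^K(X)$ in the subcentric locality over $N_\F^K(X)$, gives \emph{at the outset} that $\E_0$ is normal in $N_\F^K(X)$. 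This is then actively used: it yields weak normality of $\E_0$ in $N_{\E X}^K(X)$, which by \cite[Lemma~4.1]{Henke/Lynd:2017} reduces the $p$-power-index verification to the single condition $O^p(\Aut_{N_{\E X}^K(X)}(T_0))\leq\Aut_{\E_0}(T_0)$. Your plan to compare $O^p(\Aut_{\D}(P))$ with $O^p(\Aut_{N_{\E X}^K(X)}(P))$ for all $P\leq T_0$ and to compute $\hyp(\D)$ directly forgoes this reduction and would be much harder to carry out.

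Second, and more seriously, the step you yourself flag as ``the real work'' is left open, and your sketch does not contain the idea that makes it work. Given a $p'$-element $\phi\in\Aut_{N_{\E X}^K(X)}(T_0)$, the locality descriptions $\E X=\F_{TX}(\N X)$ and $\E S=\F_S(\N S)$ (Theorem~G of \cite{Chermak/Henke}, which you also need in order to see $\E_0\subseteq N_{\E X}^K(X)$ in the first place) only realize $\phi$ by an element of $\N S$, whereas membership in $\M=\N\cap\bN_\L^K(X)$ requires an element of $\N$. The paper bridges this gap by extending $\phi$, via the extension axiom in $N_{\E S}^K(X)$, to a $p'$-element $\hat\phi$ of $\Aut_{N_{\E S}^K(X)}(P)$ with $P=T_0C_{N_S^K(X)}(T_0)$ subcentric, realizing $\hat\phi$ by a $p'$-element of the characteristic-$p$ group $N_{\N S}(PX)$, and then invoking $O^p(N_{\N S}(PX))=O^p(N_\N(PX))$ (\cite[Lemma~6.1(b)]{Henke:2020}) to descend to an element of $N_\N(PX)\cap N_\L^K(X)\cap N_\L(P)\subseteq\M$. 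Without this descent from $\N S$ to $\N$, the identification $\E_0=N_\E^K(X)$ does not follow. Two smaller omissions: you need Lemma~\ref{L:FullyKNormalized} (that $X$ is fully $K$-normalized in $\E X$) for $N_\E^K(X)$ to be defined at all; and \cite[Theorem~A]{Chermak/Henke} does not assert uniqueness of a partial normal subgroup with prescribed intersection with $S$ --- uniqueness there is relative to a given normal subsystem --- though your argument does not really depend on that reading.
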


\begin{proof}
Notice that we can always replace $K$ by $K\cap \Aut_\F(X)$. Thus, we may assume that $K\subn K\Inn(X)$. We have shown in Lemma~\ref{L:FullyKNormalized} that $X$ is fully $K$ normlized in $\E X$. Hence, $N_\E^K(X)$ is well-defined. 

\smallskip

By \cite[Theorem~A(b)]{Henke:2015}, there exists a subcentric locality $(\L,\Delta,S)$ over $\F$. Moreover, by \cite[Theorem~A]{Chermak/Henke}, there exists a unique partial normal subgroup $\N$ of $\L$ with $T=S\cap\N$ and $\E=\F_T(\N)$. Assume for the remainder of the proof that $\L$ and $\N$ are chosen that way. As in part (b), set
\[\M:=\N\cap \bN_\L^K(X)\mbox{ and }T_0:=N_T^K(X).\] 
Since $\N$ is a partial normal subgroup of $\L$, one checks easily that $\M$ is a partial normal subgroup of $\bN_\L^K(X)$. Observe that $N_S^K(X)\cap\M=S\cap \M=(S\cap \N)\cap \bN_\L^K(X)=T\cap \bN_\L^K(X)=N_T^K(X)=T_0$. By Lemma~\ref{L:bNKXSubcentric}, $(\bN_\L^K(X),N_\F(X)^s,N_S^K(X))$ is a subcentric locality over $N_\F^K(X)$. Therefore, it follows from \cite[Theorem~A]{Chermak/Henke} that $\E_0:=\F_{T_0}(\M)$ is a normal subsystem of $N_\F^K(X)$. Notice moreover that $\E_0\subseteq\E$ as $\M\subseteq\N$. Hence, it remains only to show that $\E_0=N_\E^K(X)$.

\smallskip 

By \cite[Theorem~G(b)]{Chermak/Henke}, we have $\E X=\F_{TX}(\N X)$ and $\E S=\F_S(\N S)$; here $\N X$ and $\N S$ are partial subgroups of $\L$ by \cite[Lemma~3.20]{Chermak/Henke}. In particular, $\F_{N_{T X}^K(X)}(N_{\N X}^K(X))\subseteq N_{\E X}^K(X)$. As $\M\subseteq N_{\N X}^K(X)$, it follows $\E_0\subseteq N_{\E X}^K(X)$. Since $N_\E^K(X)$ is by definition the unique saturated subsystem of $N_{\E X}^K(X)$ over $T_0:=N_T^K(X)$ of $p$-power index, and as $\E_0$ is a subsystem of $N_{\E X}^K(X)$ over $T_0$, it is sufficient to prove that $\E_0$ has $p$-power index in $N_{\E X}^K(X)$. 

\smallskip

As $\E_0$ is normal in $N_\F^K(X)$, using \cite[Proposition~I.6.4]{Aschbacher/Kessar/Oliver:2011}, one sees easily that $\E_0$ is weakly normal in $N_{\E X}^K(X)\subseteq N_\F^K(X)$. Hence, by \cite[Lemma~4.1]{Henke/Lynd:2017}, to show that $\E_0$ has $p$-power index in $N_{\E X}^K(X)$, we only need to argue that $O^p(\Aut_{N_{\E X}^K(X)}(T_0)\leq \Aut_{\E_0}(T_0)$. For the proof of this property let $\phi\in\Aut_{N_{\E X}^K(X)}(T_0)$ be a $p^\prime$-element. It follows from the definition of $\E X$ and $\E S$ given in \cite[Definition~1]{Henke:2013} that $\E X\subseteq \E S$ and thus $\phi$ is a morphism in $N_{\E S}^K(X)$. Moreover, $T_0$ is strongly closed and thus fully normalized in $N_\F^K(X)$ and in $N_{\E S}^K(X)$. Hence, by the extension axiom, $\phi$ extends to $\hat{\phi}\in\Aut_{N_{\E S}^K(X)}(P)$, where $P:=T_0C_{N_S^K(X)}(T_0)\in N_\F^K(X)^c\subseteq N_\F^K(X)^s$. As $\phi$ is a $p^\prime$-automorphism, replacing $\hat{\phi}$ by a suitable power of $\hat{\phi}$ if necessary, we may assume that $\hat{\phi}$ is a $p^\prime$-element. As $P\in N_\F^K(X)^s$, by \cite[Lemma~3.14]{Henke:2015}, we have $PX\in\F^s$. In particular $N_\L(PX)$ and $N_{\N S}(PX)$ are groups. Since $\hat{\phi}$ extends to a morphism in $\E S=\F_S(\N S)$ which acts on $X$ as an element of $K$, using \cite[Lemma~2.3(c)]{Chermak:2015} we can conclude that $\hat{\phi}$ is realized by a $p^\prime$-element in $N_{\N S}(PX)\cap N_\L^K(X)$. By \cite[Lemma~6.1(b)]{Henke:2020}, $O^p(N_{\N S}(PX))=O^p(N_\N(PX))$. Hence, $\hat{\phi}=c_n|_P$ for some $n\in N_\N(PX)\cap N_\L^K(X)\cap N_\L(P)$. As $P\in N_\F^K(X)^s$, it follows from the definition of $\bN_\L^K(X)$ that $n\in \N\cap \bN_\L^K(X)=\M$ and thus $\phi=c_n|_{T_0}\in \F_{T_0}(\M)=\E_0$. This shows $O^p(\Aut_{N_{\E X}^K(X)}(T_0)\leq \Aut_{\E_0}(T_0)$. As argued before, it follows now that $N_\E^K(X)=\E_0$ and the assertion holds.
\end{proof}

\begin{cor}\label{C:NEXCEX}
\begin{itemize}
 \item [(a)] If $X$ is fully normalized in $\F$, then $N_\E(X)$ is a normal subsystem of $N_\F(X)$ with $N_\E(X)\subseteq \E$. Similarly, if $X$ is fully centralized, then $C_\E(X)$ is normal subsystem of $C_\F(X)$ with $C_\E(X)\subseteq \E$.
\item [(b)] Suppose $(\L,\Delta,S)$ is a subcentric locality over $\F$ and $\N$ is a partial normal subgroup of $\L$ with $T=S\cap \N$ and $\E=\F_T(\N)$. If $X$ is fully $\F$-normalized, then $\M:={\N\cap \bN_\L(X)}$ is a partial normal subgroup of $\bN_\L(X)$ with $\M\cap N_S(X)=\M\cap S=N_T(X)$ and $N_\E(X)=\F_{N_T(X)}(\M)$. Similarly, if $X$ is fully $\F$-centralized, then $\M^\circ:=\N\cap \bC_\L(X)$ is a partial normal subgroup of $\bC_\L(X)$ with $\M^\circ\cap C_S(X)=\M^\circ\cap S=C_T(X)$ and $C_\E(X)=\F_{C_T(X)}(\M^\circ)$. 
\end{itemize}
\end{cor}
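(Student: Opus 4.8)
The plan is to deduce Corollary~\ref{C:NEXCEX} directly from Theorem~\ref{T:NEKX} by specializing $K$ to the two relevant extreme cases, checking only that the hypotheses of the theorem are met and translating the notation. For part~(a), take $K=\Aut(X)$: then $K\Inn(X)=K$, so trivially $K\subn K\Inn(X)$, and Theorem~\ref{T:NEKX}(a) gives that $N_\E(X)=N_\E^{\Aut(X)}(X)$ is normal in $N_\F(X)=N_\F^{\Aut(X)}(X)$ with $N_\E(X)\subseteq\E$. For the centralizer statement take $K=\{\id\}$: here $K\Inn(X)=\Inn(X)$, and $K=\{\id\}$ is subnormal in $\Inn(X)$ since $\{\id\}$ is (even) normal in any group (more precisely, $\{\id\}\unlhd\Inn(X)$), so again the hypothesis $K\subn K\Inn(X)$ holds; Theorem~\ref{T:NEKX}(a) then yields that $C_\E(X)=N_\E^{\{\id\}}(X)$ is normal in $C_\F(X)=N_\F^{\{\id\}}(X)$ with $C_\E(X)\subseteq\E$. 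One small point to note: being fully $\F$-normalized is being fully $K$-normalized for $K=\Aut(X)$, and being fully $\F$-centralized is being fully $K$-normalized for $K=\{\id\}$, so the hypotheses of the theorem match the hypotheses of the corollary verbatim; moreover by Lemma~\ref{L:FullyKNormalized} the relevant fullness passes to $\E X$, so $N_\E(X)$ and $C_\E(X)$ are indeed well-defined.

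For part~(b), the argument is the same specialization applied to Theorem~\ref{T:NEKX}(b), together with unwinding the definitions $\bN_\L(X):=\bN_\L^{\Aut(X)}(X)$ and $\bC_\L(X):=\bN_\L^{\{\id\}}(X)$ and $N_S(X)=N_S^{\Aut(X)}(X)$, $C_S(X)=N_S^{\{\id\}}(X)$, $N_T(X)=N_T^{\Aut(X)}(X)$, $C_T(X)=N_T^{\{\id\}}(X)$ recorded in Section~2. With $K=\Aut(X)$, Theorem~\ref{T:NEKX}(b) says precisely that $\M=\N\cap\bN_\L(X)$ is a partial normal subgroup of $\bN_\L(X)$ with $\M\cap S=\M\cap N_S(X)=N_T(X)$ and $N_\E(X)=\F_{N_T(X)}(\M)$, which is the first assertion. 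With $K=\{\id\}$, the same theorem gives that $\M^\circ=\N\cap\bC_\L(X)$ is a partial normal subgroup of $\bC_\L(X)$ with $\M^\circ\cap S=\M^\circ\cap C_S(X)=C_T(X)$ and $C_\E(X)=\F_{C_T(X)}(\M^\circ)$, which is the second assertion.

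There is essentially no obstacle here; the entire content has been moved into Theorem~\ref{T:NEKX} and Lemma~\ref{L:bNKXSubcentric}, and the corollary is a formal consequence. The only thing one must be slightly careful about is to verify that the subnormality hypothesis $K\subn K\Inn(X)$ (or, equivalently for the statement, the alternative hypothesis on $K\cap\Aut_\F(X)$) genuinely holds in both degenerate cases $K=\Aut(X)$ and $K=\{\id\}$; as noted above this is immediate, so the proof is just these two substitutions followed by the notational translation.

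\begin{proof}
This is immediate from Theorem~\ref{T:NEKX} by specializing $K$. Take first $K=\Aut(X)$. Then $K\Inn(X)=K$, so $K\subn K\Inn(X)$ holds trivially. Being fully $\F$-normalized means being fully $\Aut(X)$-normalized, and $N_\F(X)=N_\F^{\Aut(X)}(X)$, $N_\E(X)=N_\E^{\Aut(X)}(X)$. Hence Theorem~\ref{T:NEKX}(a) gives that $N_\E(X)$ is a normal subsystem of $N_\F(X)$ with $N_\E(X)\subseteq\E$, and Theorem~\ref{T:NEKX}(b) gives, using $\bN_\L(X)=\bN_\L^{\Aut(X)}(X)$, $N_S(X)=N_S^{\Aut(X)}(X)$ and $N_T(X)=N_T^{\Aut(X)}(X)$, that $\M=\N\cap\bN_\L(X)$ is a partial normal subgroup of $\bN_\L(X)$ with $\M\cap N_S(X)=\M\cap S=N_T(X)$ and $N_\E(X)=\F_{N_T(X)}(\M)$.

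Now take $K=\{\id\}$. Then $K\Inn(X)=\Inn(X)$ and $K=\{\id\}\unlhd\Inn(X)$, so again $K\subn K\Inn(X)$. Being fully $\F$-centralized means being fully $\{\id\}$-normalized, and $C_\F(X)=N_\F^{\{\id\}}(X)$, $C_\E(X)=N_\E^{\{\id\}}(X)$. Hence Theorem~\ref{T:NEKX}(a) gives that $C_\E(X)$ is a normal subsystem of $C_\F(X)$ with $C_\E(X)\subseteq\E$, and Theorem~\ref{T:NEKX}(b) gives, using $\bC_\L(X)=\bN_\L^{\{\id\}}(X)$, $C_S(X)=N_S^{\{\id\}}(X)$ and $C_T(X)=N_T^{\{\id\}}(X)$, that $\M^\circ=\N\cap\bC_\L(X)$ is a partial normal subgroup of $\bC_\L(X)$ with $\M^\circ\cap C_S(X)=\M^\circ\cap S=C_T(X)$ and $C_\E(X)=\F_{C_T(X)}(\M^\circ)$.
\end{proof}
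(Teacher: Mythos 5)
Your proof is correct and takes exactly the same route as the paper, which simply cites Theorem~\ref{T:NEKX} with $K=\Aut(X)$ and $K=\{\id\}$. Your explicit verification that $K\subn K\Inn(X)$ holds in both degenerate cases and your unwinding of the notation are accurate, just more detailed than the paper's one-line argument.
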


\begin{proof}
 This follows from Proposition~\ref{T:NEKX} applied with $K=\Aut(X)$ and $K=\{\id\}$. 
\end{proof}

\bibliographystyle{amsalpha}
\bibliography{repcoh}

\end{document}